\documentclass[11pt]{amsart}
\usepackage{amsmath,amssymb,amsbsy,amsthm,latexsym,
         amsopn,amstext,amsxtra,euscript,amscd,amsthm, mathtools, dsfont}
        
\usepackage{tikz}
\usetikzlibrary{arrows.meta}
\usepackage{fullpage,amssymb,amsmath}
\usepackage{enumerate}
\usepackage[shortlabels]{enumitem} 
\usepackage{comment}
\usepackage{hyperref} 
\usepackage[capitalise]{cleveref}
    \Crefname{proposition}{Proposition}{Propositions}
    \Crefname{lemma}{Lemma}{Lemmas}
    \Crefname{remark}{Remark}{Remarks}
    \Crefname{corollary}{Corollary}{Corollaries}
    \Crefname{definition}{Definition}{Definitions}
\usepackage{url}
\usepackage{xcolor}

\usepackage{bbm}
\usepackage{mathrsfs} 

\numberwithin{equation}{section}

\newtheorem*{corollary*}{Corollary}
\newtheorem*{theorem*}{Theorem}
\newtheorem{theorem}{Theorem}[section]

\newtheorem{lemma}[theorem]{Lemma}

\theoremstyle{definition}

\newcommand{\dist}{\operatorname{dist}}

\newcommand{\D}{\mathbb D}
\newcommand{\T}{\mathbb T}
\newcommand{\C}{\mathbb C}
\newcommand{\eps}{\varepsilon}
\newcommand{\om}{\omega}

\newcommand{\calL}{\mathcal L}

\title{A proof of conjectures of Esterle and Ransford on negative powers of contractions}
\author{William Verreault}
\date{}

\address{Department of Mathematics \\
University of Toronto   \\ 
Toronto, ON\\
Canada}
\email{william.verreault@utoronto.ca}

\begin{document}

\vspace*{-4mm}

\begin{abstract}
Building on work of Ransford, we prove that whenever $E$ is a closed subset of the unit circle of Lebesgue measure zero, there exists a positive sequence $u_n\to\infty$ such that if $T$ is a contraction on a Hilbert space with $\sigma(T)\subset E$ and $\|T^{-n}\|=O(u_n)$, then $T$ is unitary.
This confirms conjectures of Esterle and Ransford. Our main new idea is a spikes-in-collars principle for positive subharmonic functions.
\end{abstract}

\maketitle

\section{Introduction}
Let $T$ be a bounded linear operator on a complex Hilbert space, and write
$\sigma(T)$ for its spectrum. We say that $T$ is a contraction if
$\|T\|\le1$.  
We establish the following conjecture of Esterle \cite[Conjecture 1.3]{Ransford}.

\begin{theorem}[Esterle's conjecture]\label{thm:esterle}
Let $E\subset\T$ be closed and have Lebesgue measure zero.  Then there exists
a positive sequence $u_n\to\infty$ such that if $T$ is a contraction on a
Hilbert space, $\sigma(T)\subset E$, and
$
        \|T^{-n}\|=O(u_n)
$,
then $T$ is unitary.
\end{theorem}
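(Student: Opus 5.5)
The plan follows Ransford's reduction, which converts the operator statement into a problem about the growth of holomorphic (or subharmonic) functions near $E$. For a contraction $T$ with $\sigma(T)\subset E\subset\T$, consider the resolvent $R(\lambda)=(\lambda-T)^{-1}$, which is holomorphic on $\C\setminus E$. For $|\lambda|>1$ the contraction bound gives $\|R(\lambda)\|\le 1/(|\lambda|-1)$. For $|\lambda|<1$ the Neumann series in $T^{-1}$ gives $\|R(\lambda)\|\le \sum_{n\ge1}|\lambda|^{n-1}\|T^{-n}\|$, which is controlled by the generating function $U(r)=\sum_n u_n r^n$. By the standard argument (Esterle, Ransford), $T$ is unitary once we know that $\|R(\lambda)\|\le C/\dist(\lambda,\T)$ on both sides: then $T^{-1}$ is also power bounded in the right sense, and a Hilbert space contraction whose inverse is a contraction is unitary. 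More precisely, it suffices to show $\|T^{-n}\|$ bounded, since a contraction with bounded negative powers on a Hilbert space is similar to a unitary (Sz.-Nagy), and being a contraction then forces unitarity.

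So the core step is a function-theoretic statement. Fix $x,y$ and set $f(\lambda)=\langle R(\lambda)x,y\rangle$, or better the subharmonic function $v=\log\|R\|$. Given $E$ of measure zero, we must choose $u_n\to\infty$ so slowly that any function $v$, subharmonic off $E$, satisfying $v(\lambda)\le \log\frac1{|\lambda|-1}$ outside and $v(\lambda)\le \log U(|\lambda|)$ inside, actually satisfies $v(\lambda)\le \log\frac{C}{1-|\lambda|}$ inside. The measure-zero hypothesis is what makes this possible. Cover $E$ by finitely many arcs of small total length, and for each scale pick an annular collar around $\T$ of width $\delta_k$. Inside a collar, the region near $\T$ but away from the covering arcs is controlled by the outside bound via reflection/harmonic measure. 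Only over the arcs can $v$ develop \emph{spikes}, and their harmonic measure shrinks with the arc lengths. The sequence $u_n$ will be built diagonally. For each $k$ we choose the cover with total length $\ell_k$ so small that a spike of height $\log U(1-\delta_k)$ over those arcs contributes $O(1)$ through the harmonic measure estimate. Then $u_n$ grows so slowly on the block of $n\approx 1/\delta_k$ that this balance holds.

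The main obstacle is the \emph{spikes-in-collars principle}. We need a quantitative two-constants/harmonic measure estimate for a positive subharmonic function on an annular collar $\{1-\delta<|\lambda|<1+\delta\}$. The function has a good bound $\log(1/\dist)$ off a set of small measure on $\T$ and an a priori large but uniform bound $M$ everywhere, and we want to conclude the good bound at points $\lambda$ with $1-|\lambda|\asymp\delta$. I would first try a Poisson/Carleman-type estimate on the collar. The harmonic measure of the $\eps$-neighbourhoods of the covering arcs, seen from interior points at depth $\delta$, is about $\ell_k/\delta$ when the arcs are shorter than $\delta$. The bad contribution is then $M\ell_k/\delta$, and the log-singularity of the good bound must be handled by a Beurling–Malliavin or log-integrability argument; this is where the difficulty lies. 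Once this lemma holds, we pass from $\|R\|$ to the bound $\sup_n\|T^{-n}\|<\infty$ via Cauchy estimates on circles $|\lambda|=1-1/n$, and conclude unitarity.
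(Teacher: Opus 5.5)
There is a genuine gap, at both ends of your argument.

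\textbf{The final step does not follow, even granting your core lemma.} A bound $\|(\lambda-T)^{-1}\|\le C/(1-|\lambda|)$ on $\D$ is only a Kreiss resolvent condition for $T^{-1}$, and that does not imply power-boundedness. Apply the Cauchy formula for the Taylor coefficients of $(\lambda-T)^{-1}=-\sum_{k\ge0}\lambda^kT^{-k-1}$ on the circle $|\lambda|=1-1/n$. It gives only $\|T^{-n}\|\le eCn$.

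A slower sequence $u_n$ gives a weaker theorem, so you may take $u_n=o(n)$. Then this output is weaker than the hypothesis $\|T^{-n}\|=O(u_n)$ you started from, and the bootstrap produces nothing.

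Separately, ``bounded negative powers $\Rightarrow$ similar to a unitary $\Rightarrow$ unitary'' is false as stated. The weighted bilateral shift $Te_k=w_ke_{k+1}$ on $\ell^2(\mathbb Z)$, with $w_0=\frac12$ and $w_k=1$ otherwise, is a non-unitary contraction with $\|T^{-n}\|=2$ for all $n$ and $\sigma(T)=\T$. What is true is this: if $|\sigma(T)|=0$, the completely non-unitary part satisfies $T^n\to0$ strongly, and together with $\|x\|\le C\|T^nx\|$ this forces that part to vanish.

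\textbf{The central estimate is unproved, and the missing idea is to reverse direction.} You acknowledge the core lemma is left open. Moreover, $\log\|(\lambda-T)^{-1}\|$ gives a harmonic-measure argument little to grip: it is unbounded near every point of $\T$ from both sides. Also, nothing in your core step uses the Hilbert space structure, yet the statement is known to fail for Banach-space contractions.

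The paper's route runs the other way, through the functional model:
\begin{enumerate}
\item After reducing to a completely non-unitary contraction, $T$ becomes $S_\Theta$, with $\Theta$ holomorphic on $\C_\infty\setminus E$ and unitary-valued on $\T\setminus E$.
\item Ransford's inequality $\|S_\Theta^{-n}\|\ge\frac12(\delta_n(\Theta)^{-1}-1)$ turns a \emph{lower} bound on $w_\Theta=\log\|\Theta(z)^{-1}\|$ into a lower bound on $\|T^{-n}\|$.
\item The function $w_\Theta$ is nonnegative, subharmonic, and has continuous boundary value $0$ on $\T\setminus E$. This is exactly what makes the collar argument work.
\item Choose $\eta_j\to0$ and a slowly growing $L$ with $tL(t)\to0$ so that $\int_{\Sigma_{\eta_j}}L(1-|z|)\,d\om_a^{\Omega_{\eta_j}}(z)\to0$. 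The maximum principle then shows a nonzero such $w$ cannot stay below $AL(1-|z|)$ on the collars.
\item Hence there are points $z_k$ with $w(z_k)\ge L(t_k)$ and $t_k=1-|z_k|\to0$. Taking $n_k\approx L(t_k)/t_k$ gives $\delta_{n_k}(\Theta)\le e^{-cL(t_k)}$.
\item For $u_n$ built from $L$, this yields $\|T^{-n_k}\|/u_{n_k}\to\infty$.
\end{enumerate}
Your ``spikes'' heuristic points at this phenomenon. Used as an upper (two-constants) estimate propagated through the resolvent, however, it runs the wrong way.
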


The problem belongs to a line of work connecting negative powers of contractions, spectral synthesis phenomena, and uniqueness properties of closed subsets of the circle.
Ransford proved this under the additional finite-rank assumption
$\operatorname{rank}(I-T^*T)<\infty$ \cite{Ransford}, which entered through a determinant reduction.
Moreover, \cref{thm:esterle} is known to be sharp, in the sense that one cannot replace the measure zero and Hilbert space assumptions \cite{Ransford}. Weaker versions of this theorem and related results were known for various sets $E$ \cite{Esterle1, Esterle2, Kellay, Zarrabi}. See \cite{Ransford} for more information on the history of this problem.

We remove the finite-rank hypothesis of Ransford with new potential-theoretic input which may be of independent interest. 
In finite dimensions, one
can reduce the problem to a scalar singular-inner
function whose singular measure is supported on $E$.  Instead of trying to replace the determinant in infinite dimensions, we use the subharmonic function
$$
        w_\Theta(z)=\log\|\Theta(z)^{-1}\|.
$$

Let $E\subset\T$ be closed and have Lebesgue measure zero.  We say that a subharmonic function $w$ on $\D$ is \emph{carried by $E$} if
$w$ is locally bounded in a neighbourhood of $\overline\D\setminus E$ and
has continuous boundary value zero on $\T\setminus E$. We show that such a function must have sufficiently large spikes somewhere in
shrinking collars over that set.

\begin{theorem}\label{thm:intro-collar-spike}
Let $E\subset\mathbb T$ be closed and have Lebesgue measure zero. Then
there exists a continuous nonincreasing function
$
        L:(0,1]\to[1,\infty)
$
such that
$
        L(t)\to\infty$ and
        $tL(t)\to 0$ as $t\to 0$, 
with the following property.

Suppose $w$ is a nonzero,
nonnegative subharmonic function carried by $E$. Then for any $A>0$ and
$\eta>0$, there exists a point $z=re^{i\theta}\in\mathbb D$ such that
$$
        0<1-r<\eta,
        \qquad
        \dist(e^{i\theta},E)<\eta,
$$
and
$$
        w(z)\ge A L(1-r).
$$
\end{theorem}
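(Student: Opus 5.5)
We argue by contradiction through the Poisson--Jensen representation of $w$. Since $w\ge 0$ is subharmonic on $\D$, for each $\rho<1$ the function $w$ is bounded above on $|z|\le\rho$ by the Poisson integral of its values on $|z|=\rho$. The quantities that matter are the circle means $m(\rho)=\frac1{2\pi}\int_0^{2\pi}w(\rho e^{i\theta})\,d\theta$, which are nondecreasing in $\rho$ and positive for $\rho$ near $1$ because $w\not\equiv0$ and $w\ge0$. The hypothesis that $w$ is carried by $E$ says that the mass of $w(\rho e^{i\cdot})$ lives, as $\rho\to1$, only near $E$. More precisely, on compact subsets of $\T\setminus E$ the function $w$ tends uniformly to $0$, by continuity of the boundary value zero and local boundedness. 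So for fixed $\eta$, the contribution to $m(\rho)$ from arcs at distance $\ge\eta$ from $E$ tends to $0$, while $m(\rho)\ge m(\rho_0)=:c>0$. Hence for $\rho$ close to $1$,
$$\int_{\{\theta:\ \dist(e^{i\theta},E)<\eta\}} w(\rho e^{i\theta})\,d\theta\ \ge\ \pi c .$$
The neighbourhood $E_\eta=\{\dist(e^{i\theta},E)<\eta\}$ has small measure, and the region $E_{\delta}$ with $\delta=1-\rho$ has measure $|E_\delta|\to0$ faster than $\delta^{0}$. This is exactly where $|E|=0$ enters.

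Choose $L$ from $E$ alone. Define $\phi(t)=|E_t|$, the Lebesgue measure of the $t$-neighbourhood of $E$ on $\T$; then $\phi(t)\to0$ as $t\to0$ since $E$ is closed and null. Set, roughly,
$$L(t)=\min\Bigl(\frac{1}{\sqrt{\phi(t)}},\ \frac{1}{\sqrt t}\Bigr),$$
then regularize it to be continuous, nonincreasing and $\ge1$, and so that $L\to\infty$ and $tL(t)\to0$. What we need is $\phi(t)L(t)\to 0$ together with $L(t)\to\infty$; with the choice above, $\phi L\le\sqrt{\phi}\to0$. The lower bound of $\min$ ensures $tL(t)\le\sqrt t\to0$.

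Suppose, for contradiction, that $w(z)<AL(1-r)$ at every $z=re^{i\theta}$ with $1-r<\eta$ and $\dist(e^{i\theta},E)<\eta$. We want to bound the integral above by $o(1)$. The difficulty is that on $E_\eta\setminus E_{\delta}$ we only have $w<AL(\delta)$ with $\delta=1-\rho$, and $|E_\eta|$ is not small relative to $1/L(\delta)$. To fix this, use harmonic majorization in a collar rather than a single circle. On the region $\Omega=\{r e^{i\theta}:1-\eta<r<\rho,\ \theta\in E_\eta\}$, compare $w$ with the bound $AL(1-|z|)$. A point $z$ at distance $s$ from $\T$ but distance $d\gg s$ from $E$ feels the boundary data near $E$ only through harmonic measure. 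Concretely, majorize $w(\rho e^{i\theta})$ for $\dist(e^{i\theta},E)=d>\delta$ by its value controlled at the larger scale $1-r\approx d$ via the Poisson integral on the circle $|z|=1-d$. This gives
$$w(\rho e^{i\theta})\lesssim AL(d)+o(1),$$
where the $o(1)$ comes from the region away from $E$. Integrating over the layers $E_{2^{k+1}\delta}\setminus E_{2^k\delta}$ yields
$$\int_{E_\eta}w(\rho e^{i\theta})\,d\theta\lesssim A\sum_k \phi(2^{k+1}\delta)\,L(2^k\delta)+\phi(\delta)L(\delta)+o(1).$$

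The main obstacle is making this dyadic sum small. The plan is to build $L$ so that $\sum_k\phi(2^{k+1}t)L(2^kt)$ is small uniformly for $t<\eta$ when $\eta$ is small: replace $L$ by a sufficiently slowly growing function subordinate to $\phi$, for example $L(t)=\psi(\phi(t))$ with $\psi$ chosen by a diagonal construction along a sequence $t_j\to0$, also controlling the doubling of $L$. With this in place, the integral tends to $0$ as $\eta\to0$, contradicting the lower bound $\pi c$. Should the Poisson transfer from scale $d$ to scale $\delta$ prove too lossy, the fallback is to take the harmonic measure of $\Omega$ directly: use the Hall/Carleman-type estimate that harmonic measure of the collar's inner edge seen from depth $\delta$ decays exponentially in (distance to $E$)$/\delta$. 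That exponential decay would make the layered sum converge for any $L$ with $tL(t)\to0$ and $\phi L\to0$.
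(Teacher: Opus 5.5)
Your lower bound is sound: the circle means are nondecreasing and eventually positive, and $w\to0$ uniformly on $\{\rho\zeta:\dist(\zeta,E)\ge\eta\}$ as $\rho\to1$. The upper bound, which carries the whole theorem, fails in two ways.

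First, the pointwise transfer is invalid. The point $\rho e^{i\theta}$ has modulus $1-\delta>1-d$, so it lies outside the circle $|z|=1-d$, and a Poisson integral over that circle says nothing about $w$ there. Any bound at depth $\delta$ that beats $AL(\delta)$ has to use the zero boundary values of $w$ on $\T\setminus E$, which an interior circle never sees.

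Second, even if you grant $w(\rho e^{i\theta})\lesssim AL(d)+o(1)$, the dyadic sum cannot be small uniformly in $t$. Since $L\ge1$ and $\phi$ is nondecreasing,
\[
\sum_{k:\,2^k t\le\eta}\phi(2^{k+1}t)\,L(2^k t)\ \ge\ \sum_{k:\,2^k t\le\eta}\phi(2^{k+1}t)\ \ge\ \frac{1}{\log 2}\int_t^{\eta}|E_s|\,\frac{ds}{s}.
\]
The right side stays bounded as $t\to0$ only if $\int_0|E_s|\,ds/s<\infty$, i.e.\ only if $E$ is a Beurling--Carleson set. For null sets whose neighbourhoods shrink slowly (say $|E_s|\gtrsim 1/\log(1/s)$) it diverges for every admissible $L$. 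You cannot avoid small $t=1-\rho$: how close $\rho$ must be to $1$ for your lower bound depends on $w$, while $L$ may depend only on $E$.

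The fallback is also wrong as stated. From a point at depth $\delta$ over an arc at distance $d$ from $E$, the relevant harmonic measures decay like the Poisson kernel, roughly $\delta/d$, not exponentially in $d/\delta$; there is no channel of width $\delta$ in this geometry. In addition, your region $\Omega$ sits inside the collar, where the hypothesis already holds pointwise, and it has the circle $|z|=\rho$ as part of its boundary, so it cannot estimate $w$ there.

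The missing ingredient is a decay factor, and algebraic decay would actually suffice. Take $2\delta\le d<\eta$, with $\eta$ at most half the collar width in your contradiction hypothesis. Apply the maximum principle in
\[
B=\{re^{i\phi}:|\phi-\theta|<d/2,\ 1-d/2<r<1\},
\]
whose top arc lies in $\T\setminus E$ and whose other sides lie in the collar. This gives
\[
w(\rho e^{i\theta})\lesssim A\,\frac{\delta}{d}\Bigl[L(d/2)+d^{-2}\int_0^{d/2}sL(s)\,ds\Bigr].
\]
If $L$ is chosen with $\sqrt t\,L(t)$ nondecreasing and $|E_{4t}|L(t)\to0$, your layered integral becomes $\lesssim A\sup_{t<\eta}|E_{4t}|L(t)$, which yields the contradiction. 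That argument is not in your proposal.

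The paper avoids pointwise transfer altogether. It fixes $a$ with $w(a)>0$ and applies the maximum principle on the component $\Omega_\eta$ of $\D\setminus K_\eta$ containing $a$, with data $AL(1-|z|)$ on the inner collar boundary $\Sigma_\eta$ and zero on $\T\setminus E_\eta$. It shows $\omega_a^{\Omega_\eta}(\Sigma_\eta)\to0$ using Harnack's principle and $|E|=0$. It then builds $L$ by a diagonal argument so that $\int_{\Sigma_{\eta_j}}L(1-|z|)\,d\omega_a^{\Omega_{\eta_j}}(z)\to0$.
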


\subsection*{Remark}
The author had the idea for this proof after a presentation of Thomas Ransford on \cite{Ransford} at the \textit{Conference on Classical Analysis in Memory of Paul Koosis}. Ransford independently came up with another proof \cite{Ransford2} following this event, based on a method suggested by Fedja Nazarov. 

\section{Proof of \cref{thm:intro-collar-spike}} \label{sec:potential}
For $\eta>0$, put
$$
        E_\eta=\{\zeta\in\T:\dist(\zeta,E)\le \eta\},
$$
and define the collar over $E_\eta$ by
$$
        K_\eta=\{re^{i\theta}\in\D:1-\eta\leq r<1,\ e^{i\theta}\in E_\eta\}.
$$
Let $\Omega_\eta$ be the connected component of $\D\setminus K_\eta$
containing the origin, and put
$$
        \Sigma_\eta=\partial\Omega_\eta\cap\D.
$$
Thus $\Sigma_\eta$ is the inner boundary of the removed collar.  We write
$\om_a^{\Omega_\eta}$ for harmonic measure in $\Omega_\eta$ from
$a\in\Omega_\eta$.

\begin{figure}[ht]
\centering
\begin{tikzpicture}[
    scale=1.05,
    every node/.style={font=\small},
    line cap=round,
    line join=round
]
    \def\R{1.95}   
    \def\ri{1.56}  
    \def\a{36}     
    \def\b{76}     

    \fill[gray!14]
        (\a:\ri) arc[start angle=\a,end angle=\b,radius=\ri]
        -- (\b:\R) arc[start angle=\b,end angle=\a,radius=\R]
        -- cycle;

    \draw[line width=0.45pt]
        (\b:\R) arc[start angle=\b,end angle=\a+360,radius=\R];

    \draw[dashed, dash pattern=on 3pt off 2pt, line width=0.75pt]
        (\a:\R) arc[start angle=\a,end angle=\b,radius=\R];

    \draw[line width=0.45pt] (\a:\ri) -- (\a:\R);
    \draw[line width=1.00pt]
        (\a:\ri) arc[start angle=\a,end angle=\b,radius=\ri];
    \draw[line width=0.45pt] (\b:\ri) -- (\b:\R);

    \fill (0,0) circle[radius=0.75pt];
    \node[below=2pt] at (0,0) {$0$};

    \node at (60:2.18) {$E_\eta$};
    \node at (56:1.34) {$\Sigma_\eta$};
    \node at (0.72,-0.24) {$\Omega_\eta$};
\end{tikzpicture}
\caption{The sets $E_\eta$ (dashed), $K_\eta$ (shaded region), $\Omega_\eta$, and $\Sigma_\eta$ (bold) used in the proof of \cref{thm:intro-collar-spike}, in the case of one connected component.}
\end{figure}

\subsection{Harmonic measure estimates}
For $0<R<1$, define
$$
        p_R(\eta)=
        \sup_{|a|\le R}\om_a^{\Omega_\eta}(\Sigma_\eta),
        \qquad \eta<1-R,
$$
and for fixed $\eta>0$ and $0<R<1-\eta$, define
$$
        q_{R,\eta}(s)=
        \sup_{|a|\le R}
        \om_a^{\Omega_\eta}\bigl(\Sigma_\eta\cap\{1-|z|\le s\}\bigr).
$$

\begin{lemma}\label[lemma]{lem:pR}
For each $0<R<1$,
$
        p_R(\eta)\to 0$ as $
        \eta\to 0.
$
Moreover, for each fixed $\eta>0$ and $0<R<1-\eta$,
$
        q_{R,\eta}(s)\to 0$ as $
        s\to 0$.
\end{lemma}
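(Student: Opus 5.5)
We prove the two statements separately. In both cases we compare harmonic measure in $\Omega_\eta$ with harmonic measure in the unit disc, and then use that $E$ has Lebesgue measure zero together with the fact that Harnack constants are uniform on $\{|a|\le R\}$.

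\emph{First statement.} Fix $0<R<1$ and $\eta<(1-R)/2$, so that $K_\eta$ lies in $\{|z|\ge 1-\eta\}$, well away from $\{|a|\le R\}$. The plan is to control $\om_a^{\Omega_\eta}(\Sigma_\eta)$ by the harmonic measure in $\D$ of a slightly larger arc set. Let $h_\eta$ be the Poisson integral in $\D$ of the indicator of $E_{2\eta}$. For $z\in\Sigma_\eta$ we have $z=re^{i\theta}$ with $1-r\le\eta$ and $\dist(e^{i\theta},E_\eta)$ small relative to $\eta$; more precisely, $z$ lies on the boundary of the collar, so $e^{i\theta}\in E_\eta$ or $e^{i\theta}$ lies on a radial side of the collar. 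In either case, the Poisson kernel at $z$ gives mass at least an absolute constant $c>0$ to the arc of length $\asymp(1-r)$ centred at $e^{i\theta}$. This arc is contained in $E_{2\eta}$ because $1-r\le\eta$. Hence $h_\eta\ge c$ on $\Sigma_\eta$. Since $h_\eta\ge 0$ is harmonic on $\Omega_\eta\subset\D$, the maximum principle (comparing boundary values on $\partial\Omega_\eta$) gives $\om_a^{\Omega_\eta}(\Sigma_\eta)\le h_\eta(a)/c$. Harnack's inequality then gives
$$h_\eta(a)\le \frac{1+R}{1-R}\,|E_{2\eta}|/(2\pi)\quad\text{for }|a|\le R,$$
and $|E_{2\eta}|\to|E|=0$ as $\eta\to0$ because $E$ is closed. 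This proves $p_R(\eta)\to0$.

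\emph{Second statement.} Now fix $\eta$ and $R$. The set $\Sigma_\eta\cap\{1-|z|\le s\}$ consists of the parts of the radial sides of the collar lying within distance $s$ of $\T$. Its closure meets $\T$ only at the endpoints of the arc components of $E_\eta$.
\begin{itemize}
\item[(i)] $E_\eta$ is a closed $\eta$-neighbourhood, so it has finitely many components, each an arc of length at least $2\eta$ (or all of $\T$, in which case $\Sigma_\eta$ is a circle and the statement is trivial for $s<\eta$). So there are finitely many radial segments.
\item[(ii)] On each such segment, harmonic measure in $\Omega_\eta$ of its part near $\T$ tends to $0$. Indeed, $\Omega_\eta$ is a finitely connected Jordan domain near these points (a Lipschitz domain). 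The harmonic measure of the arcs $\{1-|z|\le s\}\cap\Sigma_\eta$ decreases to the harmonic measure of the finite set of corner points, which is zero. This uses continuity of the measure from above, since $\bigcap_s$ of these closed sets is a finite set of points on $\T$.
\item[(iii)] To obtain the supremum over $|a|\le R$, apply Harnack's inequality in $\Omega_\eta\supset\{|z|<1-\eta\}$. The disc $\{|a|\le R\}$ is a compact subset of the connected open set $\Omega_\eta$, so there is a uniform Harnack constant, and pointwise convergence at $0$ upgrades to uniform convergence.
\end{itemize}

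The main obstacle is the geometric lower bound $h_\eta\ge c$ on $\Sigma_\eta$ in the first statement, particularly near the radial sides of the collar and when $E_\eta$ has many small gaps. If the direct Poisson-kernel estimate is awkward, we would first try replacing $E_{2\eta}$ by $E_{3\eta}$ to gain room. The rest is routine.
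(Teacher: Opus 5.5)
Your proof is correct. For the second assertion it follows the paper: $E_\eta$ is a finite union of arcs, the harmonic measure of the parts of the radial sides near the finitely many corner points tends to zero, and Harnack's inequality on $\{|a|\le R\}\Subset\Omega_\eta$ makes this uniform.

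For the first assertion you take a genuinely different route.

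\textbf{The paper's argument} is qualitative. It shows by the maximum principle that $u_\eta=\omega^{\Omega_\eta}_{\cdot}(\Sigma_\eta)$ decreases as $\eta\downarrow 0$. Harnack's principle then gives a bounded harmonic limit $u$ on $\D$ with boundary value $0$ on every closed arc of $\T\setminus E$. Finally it invokes boundary uniqueness for bounded harmonic functions (the Fatou/Poisson representation) together with $|E|=0$ to conclude $u\equiv 0$.

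\textbf{Your argument} dominates $u_\eta$ by the explicit majorant $P[\mathbf{1}_{E_{2\eta}}]/c$. This needs no monotonicity in $\eta$ and no boundary-uniqueness theorem, only continuity of Lebesgue measure from above. It also yields the quantitative bound $p_R(\eta)\le \frac{1+R}{1-R}\,|E_{2\eta}|$, with $|\cdot|$ arc-length measure. The paper's limiting argument gives no such rate.

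\textbf{The step you flag as the main obstacle} is routine. It needs neither an enlargement to $E_{3\eta}$ nor any control of the gaps of $E_\eta$. Every $z=re^{i\theta}\in\Sigma_\eta\subset K_\eta$ satisfies $e^{i\theta}\in E_\eta$ and $1-r\le\eta$. Hence the arc $I=\{e^{i\phi}:|\phi-\theta|\le 1-r\}$ lies in $E_{2\eta}$. On $I$ we have $|e^{i\phi}-z|^2=(1-r)^2+2r(1-\cos(\phi-\theta))\le 2(1-r)^2$. The Poisson kernel is therefore at least $\frac{1}{2(1-r)}$ there, so $\omega^{\D}_z(I)\ge 1/(2\pi)$ and $c=1/(2\pi)$ works.

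\textbf{The comparison} is cleanest in Perron form. The function $h_\eta/c$ is superharmonic on $\Omega_\eta$. It is continuous and at least $1$ at points of $\Sigma_\eta$, and nonnegative near $\partial\Omega_\eta\cap\T$. So it belongs to the upper class for $\mathbf{1}_{\Sigma_\eta}$, and no exceptional corner points need to be discussed.

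\textbf{A minor imprecision in (ii)}, which the paper shares: the sets $\Sigma_\eta\cap\{1-|z|\le s\}$ lie in $\D$, so their intersection over $s>0$ is empty rather than the set of corner points. Continuity from above therefore gives the limit directly, without appealing to the absence of atoms.
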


\begin{proof}
Write $u_\eta(a)=\om_a^{\Omega_\eta}(\Sigma_{\eta})$.
If $0<\eta'<\eta$, then $K_{\eta'}\subset K_\eta$, and hence
$\Omega_\eta\subset\Omega_{\eta'}$. Thus $u_{\eta'}$ is harmonic on $\Omega_{\eta}$. 
On $\Sigma_\eta$, we have
$
        0\le u_{\eta'}\le 1=u_\eta,
$
while on the outer boundary $\mathbb T\setminus E_\eta$, both functions have
boundary value $0$.  By the maximum principle,
$u_{\eta'}\le u_\eta$
        in $\Omega_\eta.$
In particular, $u_\eta(0)$ decreases as $\eta\to 0$.

By Harnack's principle, $u_\eta$ converges locally uniformly as
$\eta\to 0$ to a bounded harmonic function $u$ on $\D$.  Let
$J$ be a closed arc contained in $\T\setminus E$.  For all sufficiently
small $\eta$, the collar $K_\eta$ is disjoint from a neighbourhood of
$J$, and $u_\eta$ has boundary value $0$ on $J$.  Since
$0\le u\le u_\eta$, it follows that $u$ has boundary value $0$ on
$J$.  As $J\subset\T\setminus E$ was arbitrary and $|E|=0$, we deduce that $u\equiv0$ and therefore that $u_\eta(0)\to 0$.
Harnack's inequality gives the desired uniform convergence.

For the second assertion, for fixed $\eta$, the set $E_\eta$ is a finite union of arcs.  Hence
$\Omega_\eta$ is a finitely connected domain with piecewise smooth boundary,
and the sets $\Sigma_\eta\cap\{1-|z|\le s\}$ decrease, as $s\to 0$, to a finite set of boundary
points.  Harmonic measure has no atoms at boundary points of such a domain, so
$$
       \om_0^{\Omega_\eta}(\Sigma_\eta\cap\{1-|z|\le s\})\xrightarrow[]{s\to 0} 0.
$$
Another application of Harnack's inequality on compact subsets of
$\Omega_\eta$ gives
the desired uniformity.
\end{proof}

Let $0<R_1<R_2<\cdots\uparrow1$.  We now choose a slowly increasing weight.

\begin{lemma}\label[lemma]{lem:weighted-collar}
There exist a decreasing sequence $\eta_j\to0$ and a continuous
nonincreasing function $L:(0,1]\to[1,\infty)$ such that
$ L(t)\to\infty$ and $tL(t)\to0$ as $t\to 0$,
and, for every fixed $m$,
$$
        \sup_{|a|\le R_m}
        \int_{\Sigma_{\eta_j}} L(1-|z|)\,
        d\omega_a^{\Omega_{\eta_j}}(z)
        \xrightarrow[]{j\to\infty}0 .
$$
\end{lemma}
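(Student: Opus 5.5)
We construct $\eta_j$ and $L$ together by a diagonal argument, using both parts of \cref{lem:pR}. The idea is to split $\Sigma_{\eta_j}$ into a deep part $\{1-|z|\le s\}$ and a shallow part $\{1-|z|>s\}$. On the deep part, \cref{lem:pR} controls the harmonic measure in terms of $s$. On the shallow part, $L(1-|z|)\le L(s)$ is bounded. Note that points of $\Sigma_{\eta_j}$ satisfy $1-|z|\le \eta_j$, since $\Sigma_\eta$ lies in the closure of $K_\eta$. So only the values of $L$ on $(0,\eta_j]$ matter for the $j$-th integral.

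\textbf{Choosing the scales.} We choose $\eta_1>\eta_2>\cdots\to 0$ and thresholds $s_1>s_2>\cdots\to0$ inductively. Given $\eta_{j-1}$, use the first part of \cref{lem:pR} to pick $\eta_j<\min(\eta_{j-1},s_{j-1})$ with $\eta_j<1-R_j$ and $p_{R_j}(\eta_j)\le 4^{-j}$. Since $R_m\le R_j$ for $m\le j$, this also gives $p_{R_m}(\eta_j)\le 4^{-j}$ for all $m\le j$. Then use the second part of \cref{lem:pR}, with $\eta=\eta_j$ fixed, to pick $s_j<\eta_j$ so small that $q_{R_j,\eta_j}(s_j)\le 4^{-j}$.

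\textbf{Defining $L$.} Let $L$ be the continuous nonincreasing function with $L=1$ on $[\eta_1,1]$ and $L\le 2^{j}$ on $(0,\eta_j]$, rising slowly. Concretely, on $[s_{j},\eta_{j}]$ set $L\le 2^{j}$, and on $[\eta_{j+1},s_j]$ interpolate linearly between $2^j$ and $2^{j+1}$. This works because $\eta_{j+1}<s_j<\eta_j$. Then $L(t)\to\infty$ as $t\to0$.

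To get $tL(t)\to0$, it suffices to have $\eta_j 2^{j+1}\to0$ on each piece, since for $t\le\eta_j$ we have $tL(t)\le \eta_j 2^{j+1}$. We can impose this by additionally requiring $\eta_j\le 4^{-j}$ when choosing $\eta_j$.

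\textbf{The estimate.} Fix $m$ and take $j\ge m$ and $|a|\le R_m$. Split the integral at $s_{j}$. On the deep part, $L$ is unbounded, so a single threshold does not suffice; instead decompose further into layers. On $\Sigma_{\eta_j}\cap\{s_{k}<1-|z|\le s_{k-1}\}$ for $k>j$, we have $L\le 2^{k+1}$. The measure of this layer is at most $q_{R_j,\eta_j}(s_{k-1})$, but that quantity was only made small for the index $j$, not for each $k$.

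This is the main obstacle: the tail of the integral near the boundary for a fixed domain $\Omega_{\eta_j}$. We fix it by strengthening the choice at stage $j$. Since $q_{R_j,\eta_j}(s)\to0$ as $s\to0$, we may choose, at the moment $\eta_j$ is fixed, an infinite decreasing sequence $s_{j,k}\downarrow0$ with $q_{R_j,\eta_j}(s_{j,k})\le 4^{-j-k}$. We then require that the later thresholds satisfy $s_{k}\le s_{j,k}$ for all $j<k$. This is possible because at stage $k$ only finitely many constraints are active. With this, the layer $k$ contributes at most $2^{k+1}4^{-j-k+1}$.

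Summing over $k>j$, the deep part is $O(2^{-j})$. The shallow part $\{s_j<1-|z|\le\eta_j\}$ contributes at most $2^{j}\,p_{R_m}(\eta_j)\le 2^{-j}$. Hence the supremum over $|a|\le R_m$ is $O(2^{-j})\to0$ as $j\to\infty$, as claimed.
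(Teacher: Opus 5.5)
Your proposal is correct and is essentially the paper's argument. Your requirement $s_k\le s_{j,k}$ for $j<k$ plays the role of the paper's condition $q_{R_m,\eta_i}(\eta_j)\le 2^{-j-i-m}$ for $i<j$, and your bound $2^j p_{R_j}(\eta_j)\le 2^{-j}$ plays the role of $j\,p_{R_m}(\eta_j)\le 2^{-j-m}$. The remaining differences are cosmetic: you use interleaved thresholds $s_j$ and a weight of size $2^j$ that is continuous by construction, where the paper uses the step function $L_0=j$ and smooths it afterwards; and you use monotonicity of $p_R$ and $q_{R,\eta}$ in $R$ to drop the index $m$. One small slip is harmless: the first deep layer $k=j+1$ is only controlled by $q_{R_j,\eta_j}(s_j)\le 4^{-j}$, not by your stated $2^{k+1}4^{-j-k+1}$, but its contribution is still $O(2^{-j})$.
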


\begin{proof}
We first construct a step function with the desired property, and then smooth
it slightly.
Choose $\eta_j\to 0$ recursively.  At stage $j$, impose the finitely
many conditions
\begin{align}
        j\,p_{R_m}(\eta_j)&\le 2^{-j-m},
        && m\le j, \label{eq:pj-condition}\\
        q_{R_m,\eta_i}(\eta_j)&\le 2^{-j-i-m},
        &&i<j,\ m\le i, \label{eq:qj-condition}
\end{align}
and also arrange $\eta_j <1-R_j$ and $j\eta_j\to0$. This is possible by \cref{lem:pR}.

Define a step function $L_0:(0,1]\to[1,\infty)$ by $L_0(t)=j$ for $\eta_{j+1}<t\le\eta_j$,and extend it as a bounded positive function on $[\eta_1,1]$.  Then
$
        L_0(t)\to\infty$ and 
        $
        tL_0(t)\to0
$.

Fix $m$, $j\ge m$, and $|a|\le R_m$.  Let $\nu_{j,a}$ be the image of
$\omega_a^{\Omega_{\eta_j}}|_{\Sigma_{\eta_j}}$ under the map
$z\mapsto 1-|z|$.  Thus
$$
        \nu_{j,a}((0,s])
        =
        \omega_a^{\Omega_{\eta_j}}
        \bigl(\Sigma_{\eta_j}\cap\{1-|z|\le s\}\bigr).
$$
Since $L_0(t)=j$ on $(\eta_{j+1},\eta_j]$, $L_0(t)=k$ on
$(\eta_{k+1},\eta_k]$ for $k>j$, and the total mass is supported in
$(0,\eta_j]$, summation by parts gives
$$
\begin{aligned}
        \int_{\Sigma_{\eta_j}} L_0(1-|z|)\,
        d\omega_a^{\Omega_{\eta_j}}(z)
        &\le
        j\,\nu_{j,a}((0,\eta_j])
        +
        \sum_{k>j}\nu_{j,a}((0,\eta_k]).
\end{aligned}
$$
The first term is bounded by
$$
        j\,p_{R_m}(\eta_j)\le 2^{-j-m}.
$$
For $k>j$, condition~\eqref{eq:qj-condition}, applied at stage $k$ with
$i=j$, gives
$$
        \nu_{j,a}((0,\eta_k])
        \le
        q_{R_m,\eta_j}(\eta_k)
        \le
        2^{-k-j-m}.
$$
Therefore
$$
        \int_{\Sigma_{\eta_j}} L_0(1-|z|)\,
        d\omega_a^{\Omega_{\eta_j}}(z)
        \le
        2^{-j-m}
        +
        \sum_{k>j}2^{-k-j-m},
$$
which tends to $0$ as $j\to\infty$, uniformly for $|a|\le R_m$.

Finally choose a continuous nonincreasing function $L:(0,1]\to[1,\infty)$
such that
$$
       L_0(t)\le L(t)\le L_0(t)+1, \qquad 0<t\leq 1.
$$
This can be done by smoothing the jumps of $L_0$ on the intervals immediately
above the points $\eta_j$.  Then we still have
$
        L(t)\to\infty,$ and
        $tL(t)\to 0$ as $t\to 0$.
Moreover,
$$
\begin{aligned}
        \int_{\Sigma_{\eta_j}} L(1-|z|)\,
        d\omega_a^{\Omega_{\eta_j}}(z)
        &\le
        \int_{\Sigma_{\eta_j}} L_0(1-|z|)\,
        d\omega_a^{\Omega_{\eta_j}}(z)
        +
        \omega_a^{\Omega_{\eta_j}}(\Sigma_{\eta_j}).
\end{aligned}
$$
The first term tends to $0$ uniformly for $|a|\le R_m$, as shown above,
and the second tends to $0$ uniformly by \cref{lem:pR}.  Hence $L$
has the required properties.
\end{proof}

\subsection{Proof of \cref{thm:intro-collar-spike}}
Choose $a\in\D$ with $w(a)>0$, and choose $m$ such that $|a|\le R_m$.
Suppose the conclusion fails. Then there exist $A>0$ and $\eta_0>0$
such that
$$
        w(re^{i\theta})<A L(1-r)
$$
whenever
$$
        0<1-r<\eta_0,
        \qquad
        \dist(e^{i\theta},E)<\eta_0.
$$

Choose $j\ge m$ so large that
$
        \eta_j<\min\{\eta_0,1-|a|\}.
$
Then $a\in\Omega_{\eta_j}$. Moreover, by the definition of
$\Sigma_{\eta_j}$, every $z=re^{i\theta}\in\Sigma_{\eta_j}$ satisfies
$
        0<1-r\le \eta_j$
        and
        $\dist(e^{i\theta},E)\le \eta_j$.
Hence $\Sigma_{\eta_j}$ is contained in the above collar, and therefore
$
        w(z)<A L(1-|z|)
$ for $z\in\Sigma_{\eta_j}$.

Fix $0<s<\eta_j$ and truncate $\Omega_{\eta_j}$ near the unit
circle by considering the component $\Omega_{\eta_j,s}$ of
$$
        \Omega_{\eta_j}\cap\{|z|<1-s\}
$$
which contains $a$.  On the part of the inner collar boundary belonging to
$\partial\Omega_{\eta_j,s}$, we have
$
        w(z) < A L(1-|z|)$.
On the remaining boundary near $\T\setminus E_{\eta_j}$, the function $w$
is uniformly small as $s\to 0$, because
$\T\setminus E_{\eta_j}$ is compactly contained in $\T\setminus E$ and
$w$ has continuous boundary value $0$ there. 
Thus, there exists $\varepsilon_s\to 0$ as $s\to 0$ such that 
\begin{equation} \label{eq:harm1}
        w(a)
        \le
        A\int_{\Sigma_{\eta_j,s}}
        L(1-|z|)
        \,d\omega_a^{\Omega_{\eta_j,s}}(z) + \varepsilon_s,
\end{equation}
by the subharmonic maximum principle in
$\Omega_{\eta_j,s}$. Domain monotonicity of harmonic measure further implies
\begin{equation} \label{eq:harm2}
        \int_{\Sigma_{\eta_j,s}} L(1-|z|)
        \,d\omega_a^{\Omega_{\eta_j,s}}(z)
        \le
        \int_{\Sigma_{\eta_j}} L(1-|z|)
        \,d\omega_a^{\Omega_{\eta_j}}(z).
\end{equation}
Combining \eqref{eq:harm1} and \eqref{eq:harm2} and letting $s\to 0$ gives
$$
        w(a)
        \le
        A\int_{\Sigma_{\eta_j}}
        L(1-|z|)
        \,d\omega_a^{\Omega_{\eta_j}}(z).
$$
Now let $j\to\infty$.  Since $|a|\le R_m$, \cref{lem:weighted-collar}
implies $w(a)\le0$, contradicting $w(a)>0$.  This proves the Theorem.

\section{Proof of Esterle's conjecture}

Let $F,F'$ be Hilbert spaces and let
$\Theta:\C_\infty\setminus E\to\calL(F,F')$ be holomorphic.  We say that
$\Theta$ is unitary-valued on $\T\setminus E$ if $\Theta(\zeta)$ is a
unitary operator from $F$ onto $F'$ for each $\zeta\in\T\setminus E$.
For $n\ge1$, set
$$
        \delta_n(\Theta)=
        \inf_{z\in\D}
        \max\bigl\{|z|^n,\|\Theta(z)^{-1}\|^{-1}\bigr\},
$$
with the convention that $\|\Theta(z)^{-1}\|^{-1}=0$ if $\Theta(z)$ is not
invertible. The quantity $\delta_n$ appeared in \cite{Ransford} as the central quantity to control. In fact, Ransford formulated the following Theorem as a conjecture, except with the requirement that  $\Theta(z)$ is purely contractive in $\D$, meaning that $\|\Theta(z) x\|<\|x\|$ for all $z\in\D$, $x\in  F \setminus\{0\}$.
We only require the weaker assumptions that $\Theta$ is contractive in $\D$ and not constant unitary.

\begin{theorem}\label{thm:operator-uniformity}
Let $E\subset\T$ be closed and have Lebesgue measure zero.  There exists a positive sequence
$\eps_n\to 0$ such that the following holds. Suppose that
$\Theta:\C_\infty\setminus E\to\calL(F,F')$ is holomorphic, contractive in
$\D$, unitary-valued on $\T\setminus E$, and not constant unitary.  Then
$$
        \liminf_{n\to\infty}\frac{\delta_n(\Theta)}{\eps_n}=0.
$$
\end{theorem}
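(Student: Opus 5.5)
We will deduce the statement from \cref{thm:intro-collar-spike}, applied to the subharmonic function $w_\Theta(z)=\log\|\Theta(z)^{-1}\|$. Let $L$ be the function given by that theorem for the set $E$.

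The sequence $\eps_n$ will be defined from $L$ alone, so that it depends only on $E$. For $z=re^{i\theta}$ with $1-r=t$, the quantity $\delta_n(\Theta)$ is at most $\max\{(1-t)^n,e^{-w_\Theta(z)}\}$. We set
$$
\eps_n=\inf_{0<t\le1}\max\bigl\{(1-t)^n,\ e^{-L(t)}\bigr\}.
$$
Since $L$ is continuous and nonincreasing with $L(t)\to\infty$, we have $\eps_n\to0$. More precisely, the infimum is attained near the point $t_n$ where $nt\approx L(t)$. The condition $tL(t)\to0$ ensures that $t_n\to0$ and that $L(t_n)\to\infty$.

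Next we verify that $w=w_\Theta$ satisfies the hypotheses of \cref{thm:intro-collar-spike}.
\begin{enumerate}[(i)]
\item \emph{Subharmonicity.} On the open set where $\Theta(z)$ is invertible, $\log\|\Theta(z)^{-1}\|$ is subharmonic, because $\Theta^{-1}$ is holomorphic and operator-valued. At points where $\Theta(z)$ is not invertible we set $w=+\infty$. This is a technical issue: we either replace $\Theta$ by the invertible case, or note that if $\Theta(z_0)$ fails to be invertible then $\delta_n(\Theta)\le|z_0|^n$, which decays geometrically, so the conclusion is trivial with $\eps_n$ chosen to decay more slowly than every $r^n$. (Our $\eps_n$ already does, since $L(t)\to\infty$.)
\item \emph{Nonnegativity.} Since $\Theta$ is contractive, $\|\Theta(z)^{-1}\|\ge1$, so $w\ge0$.
\item \emph{Carried by $E$.} Near $\T\setminus E$ the function $\Theta$ is holomorphic and unitary on the circle, so $\Theta^{-1}$ is holomorphic nearby and $\|\Theta(\zeta)^{-1}\|=1$ there. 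Hence $w$ is locally bounded near $\overline\D\setminus E$ and has continuous boundary value $0$ on $\T\setminus E$.
\item \emph{Nonzero.} If $w\equiv0$, then $\|\Theta(z)^{-1}\|=1$ and $\|\Theta(z)\|\le1$ for every $z$, so each $\Theta(z)$ is an isometric contraction with contractive inverse, hence unitary. We then need that a holomorphic unitary-valued function on $\D$ is constant. This follows from maximum-modulus arguments applied to $\langle\Theta(z)x,\Theta(z_0)x\rangle$, which has modulus at most $\|x\|^2$ and equals $\|x\|^2$ at $z_0$. This contradicts the assumption that $\Theta$ is not constant unitary.
\end{enumerate}

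We now apply \cref{thm:intro-collar-spike}. For each $A$ it yields points $z_k$ with $t_k=1-|z_k|\to0$ and $w(z_k)\ge A L(t_k)$. We aim to show that $\delta_n/\eps_n$ is small along a suitable sequence $n_k$. At $z_k$ we have $e^{-w(z_k)}\le e^{-AL(t_k)}$. Choose $n_k$ so that $(1-t_k)^{n_k}\approx e^{-AL(t_k)}$, that is, $n_k\approx AL(t_k)/t_k$. Then $\delta_{n_k}\lesssim e^{-AL(t_k)}$.

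The main obstacle is comparing this bound with $\eps_{n_k}$. Roughly, $\eps_n\approx e^{-L(t^*)}$, where $t^*$ solves $nt^*=L(t^*)$. With $n=AL(t_k)/t_k$, monotonicity of $L$ should give $t^*\le t_k$ up to constants. But $t^*<t_k$ means $L(t^*)\ge L(t_k)$, which is the wrong direction. To fix this, we should define $\eps_n$ with a weakened weight, for instance $\eps_n=\inf_t\max\{(1-t)^n,e^{-\sqrt{L(t)}}\}$, or more cleanly use $L^{1/2}$ in place of $L$ (after checking it still satisfies the hypotheses). The ratio then behaves like $e^{-AL(t_k)+C\sqrt{L(\cdot)}}$, which tends to $0$.

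The bulk of the remaining work is the careful bookkeeping that the extremal $t^*$ for $n_k$ is comparable to $t_k$, using $tL(t)\to0$ and the monotonicity of $L$. The free parameter $A$ absorbs the constants, and letting $A\to\infty$ along a diagonal sequence gives $\liminf\delta_n/\eps_n=0$.
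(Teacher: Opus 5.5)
Your verification of the hypotheses is fine and matches the paper: the non-invertible case via $\delta_n\le|z_0|^n$ and $\eps_n=e^{-o(n)}$, nonnegativity, being carried by $E$, and the exclusion of $w_\Theta\equiv0$. \textbf{The gap is the step you defer to ``bookkeeping'': a lower bound for $\eps_{n_k}=\inf_{0<t\le1}\max\{(1-t)^{n_k},e^{-\sqrt{L(t)}}\}$.} Because $\eps_n$ is an infimum over all $t$, you must control $L$ on the whole range just below $t_k$. There $(1-t)^{n_k}$ is still about $(1-t_k)^{n_k}\approx e^{-AL(t_k)}$. The only facts available about $L$ are that it is continuous and nonincreasing, with $L\to\infty$ and $tL(t)\to0$. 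These do not prevent $L$ from climbing from $L(t_k)$ to $A^2L(t_k)^2$ on an interval $[t_k(1-\epsilon_k),t_k]$ with $\epsilon_kAL(t_k)\to0$. At $t=t_k(1-\epsilon_k)$ both terms are then about $e^{-AL(t_k)}$. So $\eps_{n_k}$ is at most a constant times your bound for $\delta_{n_k}$, and the same computation defeats every other choice of $n$. Theorem~\ref{thm:intro-collar-spike} says nothing about where the $t_k$ sit relative to such steep pieces of $L$. Enlarging $A$ does not help either, since the obstruction simply moves to height $A^2L(t_k)^2$. So you cannot derive from monotonicity and $tL(t)\to0$ that the extremal $t^*$ is comparable to $t_k$, or, more to the point, that $L(t^*)$ is controlled by $L(t_k)$. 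The term $C\sqrt{L(\cdot)}$ in your ratio hides exactly this uncontrolled quantity.

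The paper avoids the issue by putting an infimum in the exponent, so that a lower bound on $\eps_{n_k}$ needs only one witness. It sets $M(t)=L(t)/(4t)$, $\alpha_n=\inf\{L(t):M(t)\ge n\}$ and $\eps_n=\exp\bigl(-\tfrac1{10}\sqrt{\inf_{m\ge n}\alpha_m}\bigr)$, and takes $n_k=\lfloor M(t_k)\rfloor$. The spike point $t_k$ is then itself admissible, so $\eps_{n_k}\ge e^{-\sqrt{L(t_k)}/10}$ with no regularity of $L$, while $\delta_{n_k}\le e^{-cL(t_k)}$. Here $A=1$ suffices and no diagonal argument is needed. Alternatively, you could keep your definition after first replacing $L$ by a regularized minorant, e.g.\ $\tilde L(t)=\min_{t\le s\le 1}\bigl(L(s)+\log_2(s/t)\bigr)$. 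This is nonincreasing, satisfies $1\le\tilde L\le L$ and $\tilde L\to\infty$, and obeys $\tilde L(t/2)\le\tilde L(t)+1$. Splitting the infimum at $t=t_k/\sqrt{\tilde L(t_k)}$ then gives $\eps_{n_k}\ge e^{-C\sqrt{\tilde L(t_k)}}$. As written, however, the decisive estimate is missing.
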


To prove this theorem, we need the following consequence of the work in Section \ref{sec:potential}

\begin{lemma}\label[lemma]{thm:collar-uniformity}
Let $E\subset\T$ be closed and have Lebesgue measure zero.  Then there exists a
positive sequence $\eps_n\to 0$, depending only on $E$, such that
for every nonnegative, not identically zero subharmonic $w$ carried by $E$,
$$
        \liminf_{n\to\infty}\frac{d_n(w)}{\eps_n}=0,
$$
where $$
        d_n(w)
        =
        \inf_{z\in\D}
        \max\bigl\{|z|^n,e^{-w(z)}\bigr\}.
$$
\end{lemma}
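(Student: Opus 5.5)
We derive the lemma from \cref{thm:intro-collar-spike} with the function $L$ given there. The choice is
$$
        \eps_n=\exp\Bigl(-\tfrac12\,n\,t_n\Bigr),
$$
where $t_n\in(0,1]$ solves $t_nL(t_n)^{1/2}=\ldots$, or more simply $\eps_n=\sup_{0<t\le 1}\min\{e^{-nt},e^{-\sqrt{L(t)}}\}$. The point is that $\eps_n$ depends only on $E$ through $L$. Since $L(t)\to\infty$ as $t\to0$, we have $\eps_n\to0$: for large $n$, any $t$ with $e^{-nt}$ not small is tiny, and then $e^{-\sqrt{L(t)}}$ is small.

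Now fix $w$ and suppose, for contradiction, that $d_n(w)\ge c\,\eps_n$ for some $c>0$ and all large $n$. The strategy is to exhibit, for infinitely many $n$, a point $z$ at which $d_n(w)\le\max\{|z|^n,e^{-w(z)}\}$ is much smaller than $\eps_n$. Take $A$ large and $\eta$ small in \cref{thm:intro-collar-spike}. It gives a point $z=re^{i\theta}$ with $t:=1-r\in(0,\eta)$ and $w(z)\ge A L(t)$. Choose $n$ with $nt\approx A L(t)$, say $n=\lceil A L(t)/t\rceil$. Then
$$
|z|^n\le e^{-nt}\le e^{-AL(t)}, \qquad e^{-w(z)}\le e^{-AL(t)},
$$
so $d_n(w)\le e^{-AL(t)}$.

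It remains to compare this with $\eps_n$, and this is the delicate step. With $n\approx AL(t)/t$, we need $\eps_n\gg e^{-AL(t)}$. For that, $\eps_n$ has to be defined so that it decays more slowly than $e^{-AL(t)}$ at the scale $n\sim L(t)/t$. A cleaner choice is
$$
\eps_n=\sup_{0<s\le1}\exp\bigl(-\max\{ns,\,L(s)\}\bigr).
$$
Evaluating at $s=t$ with $n\approx AL(t)/t$ gives only $e^{-AL(t)}$ again, so instead we evaluate at the balancing point $s'$ where $ns'=L(s')$. Since $L$ is nonincreasing, $s'\ge t/A$ roughly, because $n(t/A)\approx L(t)\le L(t/A)$. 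The supremum therefore dominates $e^{-L(s')}$ with $s'$ between $t/A$ and $t$, hence $\eps_n\ge e^{-L(t/A)}$. The main obstacle is then to compare $L(t/A)$ with $AL(t)$. Since $L$ can be chosen slowly varying in the construction of \cref{lem:weighted-collar} (it grows only by $1$ per step $\eta_j$, and $\eta_j$ may be sparse), I would first try to arrange $L(t/A)\le \tfrac12 AL(t)$ for small $t$, or else replace $L$ by $\sqrt{L}$ or $\log L$ in the definition of $\eps_n$. The spike bound $w\ge AL\ge A\sqrt L$ still holds for such a replacement, and it makes $\eps_n$ decay much more slowly. This yields $d_n/\eps_n\le e^{-AL(t)/2}$, which is small.

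Finally, letting $\eta\to0$ (equivalently $A\to\infty$) produces infinitely many distinct $n$, because $t\to0$ forces $n\ge AL(t)/t\to\infty$ using $tL(t)\to0$ and $L\ge1$. Along these $n$ the ratio $d_n(w)/\eps_n\to0$, which gives the liminf statement.
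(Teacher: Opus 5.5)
Your strategy (take a spike point from \cref{thm:intro-collar-spike}, then choose $n\approx L(t)/t$) is sound, but there is a genuine gap: the comparison of $d_n(w)$ with $\eps_n$ is never established, and the $\sqrt L$ or $\log L$ remedy you propose does not close it.

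Any $\eps_n=\exp\bigl(-\inf_s\max\{ns,g(L(s))\}\bigr)$ equals $e^{-ns'}$, where $s'$ is the balancing point. Your only bound on $d_n(w)$ contains $|z|^n\approx e^{-nt}$. So the ratio can be small only if $n(t-s')\to\infty$, with $s'$ at a strictly smaller scale than $t$, and you must also have $g(L(s'))\le AL(t)-K$. Nothing recorded about $L$ (continuous, nonincreasing, $L\to\infty$, $tL(t)\to0$) controls $L(s')$ in terms of $L(t)$. For instance, $L$ could rise from $L(t)$ to $e^{L(t)^2}$ on $[t/2,t]$ for a sequence of $t\to0$, and then $n(t-s')\to0$. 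The spike theorem gives you no control over where $t$ falls. Taking $g=\sqrt{\cdot}$ or $g=\log$ only changes how large a jump is tolerated; it does not remove the cross-scale comparison.

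Your other option does work, but it is only announced and requires reopening \cref{lem:weighted-collar}. Imposing additionally $\eta_{j+1}\le\eta_j/2$ is harmless, since every condition there is a smallness condition. It gives $L(t/A)\le L(t)+\log_2A+2$. For a fixed $A\ge16$ this yields $L(t/A)\le\frac12AL(t)$, hence $d_n/\eps_n\le e^{1-AL(t)/2}$.

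The missing idea is that the paper needs no regularity of $L$ beyond monotonicity. It takes $A=1$ and a \emph{smaller} exponent $n=\lfloor L(t)/(4t)\rfloor$. It also puts the square root outside the balancing: $\eps_n=\exp(-\tfrac1{10}\sqrt{\alpha_n})$ with $\alpha_n=\inf\{L(s):L(s)\ge4ns\}$, which is essentially $\exp\bigl(-\tfrac1{10}\sqrt{\inf_s\max\{4ns,L(s)\}}\bigr)$. The spike scale $t$ is then itself admissible for this $n$, so $\alpha_n\le L(t)$ by monotonicity alone. Meanwhile $nt\ge L(t)/8$ still gives $d_n(w)\le e^{-L(t)/8}$. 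Hence $d_n/\eps_n\le\exp\bigl(-L(t)/8+\sqrt{L(t)}/10\bigr)\to0$. The weaker bound on $|z|^n$ is paid for by the gap between $L$ and $\sqrt L$, and everything happens at the single scale $t$.
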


\begin{proof}[Proof of Lemma \ref{thm:collar-uniformity}]
Let $L$ be the weight from \cref{lem:weighted-collar}.  Put
$$
        M(t)=\frac{L(t)}{4t}.
$$
Then $M(t)\to\infty$ as $t\to 0$.  For $n\ge1$, define
$$
        \alpha_n=\inf\{L(t):0<t<1,\ M(t)\ge n\}.
$$
The set in the infimum is nonempty. For example, if $t=1/(4n)$, then
$M(t)=nL(t)\ge n$.  
Also $\alpha_n\to\infty$.  Indeed, if
$M(t_j)\ge n_j\to\infty$ while $L(t_j)$ stayed bounded, then
$t_j\to0$, contradicting $L(t)\to\infty$.
Set
$$
        \beta_n=\frac1{10}\sqrt{\inf_{m\geq n}\alpha_m},
        \qquad
        \eps_n=e^{-\beta_n}.
$$
Clearly $\eps_n$ is
nonincreasing and tends to zero.  Moreover $\eps_n=e^{-o(n)}$.  Indeed,
with $t_n=1/(4n)$, the admissibility above gives
$
        \alpha_n\le L(1/(4n)).
$
Since $tL(t)\to0$, this implies $\alpha_n=o(n)$, and hence
$$
        \beta_n\le \frac1{10}\sqrt{\alpha_n}=o(n).
$$

Let $w\ge0$ be nonzero and carried by $E$.  By
\cref{thm:intro-collar-spike} with $A=1$, there are points $z_k\in\D$ such that
$$
        t_k=1-|z_k|\to0,
        \qquad
        w(z_k)\ge L(t_k).
$$
Set
$$
        n_k=\left\lfloor\frac{L(t_k)}{4t_k}\right\rfloor.
$$
Since $M(t_k)\to\infty$, we have $n_k\to\infty$.  Passing to a subsequence
if necessary, assume that $n_k$ is strictly increasing.  For all large $k$,
$
        n_k t_k\geq L(t_k)/8.
$
Therefore
$$
\begin{aligned}
        d_{n_k}(w)
        &\le \max\{|z_k|^{n_k},e^{-w(z_k)}\} \\
        &\le \max\{(1-t_k)^{n_k},e^{-L(t_k)}\} \\
        &\le e^{-cL(t_k)}
\end{aligned}
$$
for an absolute constant $c>0$.  Since $n_k\le M(t_k)$, the point $t_k$
is admissible in the definition of $\alpha_{n_k}$.  Hence
$$
        \alpha_{n_k}\le L(t_k),
        \qquad
        \beta_{n_k}\le \frac1{10}\sqrt{L(t_k)}.
$$
Thus
$$
        \frac{d_{n_k}(w)}{\eps_{n_k}}
        =d_{n_k}(w)e^{\beta_{n_k}}
        \le
        \exp\big(-cL(t_k)+\frac1{10}\sqrt{L(t_k)}\big)
        \longrightarrow0.
$$
The result follows.
\end{proof}

\begin{proof}[Proof of \cref{thm:operator-uniformity}]
If $\Theta(z_0)$ is not invertible for some $z_0\in\D$, then
$\delta_n(\Theta)\le |z_0|^n$.  Since $\eps_n=e^{-o(n)}$, it follows that
$\delta_n(\Theta)/\eps_n\to0$.
Assume therefore that $\Theta(z)$ is invertible for every $z\in\D$, and
put
$$
        w_\Theta(z)=\log\|\Theta(z)^{-1}\|.
$$
Then $w_\Theta$ is subharmonic.  Since $\Theta(z)$ is contractive,
$\|\Theta(z)^{-1}\|\ge1$, and so $w_\Theta\ge0$.  Also, since $\Theta$
extends holomorphically through $\T\setminus E$ and is unitary-valued there,
$w_\Theta$ has continuous boundary value zero on $\T\setminus E$.  Thus
$w_\Theta$ is carried by $E$, unless it is identically zero.

If $w_\Theta\not\equiv0$, then the conclusion follows from \cref{thm:collar-uniformity} since $d_n(w_\Theta)=\delta_n(\Theta)$.
It remains to exclude the case $w_\Theta\equiv0$.  Then
$\|\Theta(z)^{-1}\|=1$ for all $z\in\D$.  Since $\Theta(z)$ is
contractive, for every $x\in F$,
$$
        \|\Theta(z)x\|\le\|x\|,
        \qquad
        \|x\|=\|\Theta(z)^{-1}\Theta(z)x\|
        \le \|\Theta(z)x\|.
$$
Thus $\Theta(z)$ is a surjective isometry for every $z\in\D$, hence
unitary.  For each fixed $x\in F$, the Hilbert-space-valued holomorphic
function $z\mapsto\Theta(z)x$ has constant norm $\|x\|$, and is therefore
constant.  Hence $\Theta$ is constant unitary, contrary to the hypothesis.
\end{proof}


\begin{proof}[Proof of \cref{thm:esterle}]
We now explain how \cref{thm:operator-uniformity} gives \cref{thm:esterle}, following \cite{Ransford}.
Let $(\eps_n)$ be the sequence from Lemma
\ref{thm:collar-uniformity}, and set
$
        u_n=\eps_n^{-1}.
$
Then $u_n\to\infty$.
Let $T$ be a non-unitary Hilbert-space contraction with
$\sigma(T)\subset E$.  We show that
$$
        \limsup_{n\to\infty}\frac{\|T^{-n}\|}{u_n}=\infty.
$$
This is enough to prove the theorem.
By a standard reduction as in Ransford's work, using the Sz.-Nagy--Foias functional-model theory for contractions, one may pass to a completely non-unitary
contraction $T_1$ on a separable Hilbert space such that
$\sigma(T_1)\subset\sigma(T)$ and
$
        \|T_1^{-n}\|\le\|T^{-n}\|$ for $n\geq 1$ \cite[Lemma 6.1]{Ransford}.
Since $\sigma(T_1)\subset\T$ has Lebesgue measure zero, both $T_1^n$ and
$T_1^{*n}$ converge strongly to zero \cite[Lemma 6.2]{Ransford}.  The Sz.-Nagy--Foias model theorem then
realizes $T_1$ as a compressed shift $S_\Theta$ associated with an
operator-valued inner function $\Theta$.  Moreover, $\Theta$ may be chosen
so that it is purely contractive in $\D$, extends holomorphically to
$\C_\infty\setminus E$, and is unitary-valued on $\T\setminus E$. (See \cite[Section 5]{Ransford} for more details.) The hypotheses of \cref{thm:operator-uniformity} are therefore satisfied. It follows that
along a subsequence, $\eps_n/\delta_n(\Theta)\to\infty$.  
Finally, \cite[Theorem 5.5]{Ransford} gives
$$
        \|S_\Theta^{-n}\|
        \ge
        \frac12\Big(\frac1{\delta_n(\Theta)}-1\Big),
        \qquad n\geq 1.
$$
Hence
$$
\begin{aligned}
        \limsup_{n\to\infty}\frac{\|T^{-n}\|}{u_n}
        &\ge
        \limsup_{n\to\infty}\frac{\|T_1^{-n}\|}{u_n} \\
        &=
        \limsup_{n\to\infty}\eps_n\|S_\Theta^{-n}\| \\
        &\ge
        \frac12\limsup_{n\to\infty}
        \Big(\frac{\eps_n}{\delta_n(\Theta)}-\eps_n\Big)
        =\infty,
\end{aligned} 
$$ 
as desired.
\end{proof}

\subsection*{Acknowledgments} The author would like to thank Marcu-Antone Orsoni for his comments, and Thomas Ransford for helpful discussions and for sharing his manuscript \cite{Ransford2}.

\bibliographystyle{acm}
\bibliography{reference.bib}

\end{document}